\newcommand{\Q}{\mathbb{Q}}
\newcommand{\F}{\mathbb{F}}
\newcommand{\Z}{\mathbb{Z}}
\newcommand{\N}{\mathbb{N}}
\def\11{{\mathbf 1}}
\def\cK{{\mathcal K}}
\def\cL{{\mathcal L}}
\def\cM{{\mathcal M}}
\def\cO{{\mathcal O}}
\mathchardef\alphag="7C0B
\mathchardef\betag="7C0C
\mathchardef\gammag="7C0D
\mathchardef\deltag="7C0E
\mathchardef\varepsilong="7C22
\mathchardef\varphig="7C27
\mathchardef\psig="7C20
\mathchardef\zetag="7C10
\mathchardef\epsilong="7C0F
\mathchardef\rhog="7C1A
\mathchardef\taug="7C1C
\mathchardef\upsilong="7C1D
\mathchardef\iotag="7C13
\mathchardef\thetag="7C12
\mathchardef\pig="7C19
\mathchardef\sigmag="7C1B
\mathchardef\etag="7C11
\mathchardef\omegag="7C21
\mathchardef\kappag="7C14
\mathchardef\lambdag="7C15
\mathchardef\mug="7C16
\mathchardef\xig="7C18
\mathchardef\chig="7C1F
\mathchardef\nug="7C17
\mathchardef\varthetag="7C23
\mathchardef\varpig="7C24
\mathchardef\varrhog="7C25
\mathchardef\varsigmag="7C26
\mathchardef\Omegag="7C0A
\mathchardef\Thetag="7C02
\mathchardef\Sigmag="7C06
\mathchardef\Deltag="7C01
\mathchardef\Phig="7C08
\mathchardef\Gammag="7C00
\mathchardef\Psig="7C09
\mathchardef\Lambdag="7C03
\mathchardef\Xig="7C04
\mathchardef\Pig="7C05
\mathchardef\Upsilong="7C07
\newtheorem{thm}{Theorem}
\newtheorem{lem}{Lemma}
\newtheorem*{cor*}{Corollary}
\theoremstyle{remark}
\newtheorem{remark}{Remark}
\theoremstyle{plain}
\numberwithin{equation}{subsection}
\def\boxit#1#2{\setbox1=\hbox{\kern#1{#2}\kern#1}%
\dimen1=\ht1 \advance\dimen1 by #1
\dimen2=\dp1 \advance\dimen2 by #1
\setbox1=\hbox{\vrule height\dimen1 depth\dimen2\box1\vrule}%
\setbox1=\vbox{\hrule\box1\hrule}%
\advance\dimen1 by .4pt \ht1=\dimen1
\advance\dimen2 by .4pt \dp1=\dimen2 \box1\relax}
\begin{document}

\setcounter{tocdepth}{2} 

\title[Finite by Presburger Groups and $p$-adic Fields]{Model theory of finite-by-Presburger Abelian groups and finite 
extensions of $p$-adic fields}


\author[Jamshid Derakhshan]{Jamshid Derakhshan}
\address{University of Oxford, Mathematical Institute, Andrew Wiles Building, Radcliffe Observatory Quarter, Woodstock Road, Oxford, OX2 6GG, UK}
\email{derakhsh@maths.ox.ac.uk}

\author[Angus Macintyre]{Angus Macintyre}
\address{Queen Mary, University of London,
School of Mathematical Sciences, Queen Mary, University of London, Mile End Road, London E1 4NS, UK}
\email{angus@eecs.qmul.ac.uk}

\subjclass[2000]{Primary 03C10, 03C60, 11D88, 11U09; Secondary 11U05}
\keywords{model theory, $p$-adic numbers, local fields, model completeness, quantifier elimination, pre-ordered groups, Presburger arithmetic}

\begin{abstract} 
We define a class of pre-ordered abelian groups that we call 
finite-by-Presburger groups, and prove that 
their theory is model-complete. We show that certain quotients of the multiplicative group 
of a local field of characteristic zero are finite-by-Presburger and interpret the higher residue rings of the local field. 
We apply these results to give a new proof of the model completeness in the ring language of a local field of characteristic zero (a result that follows also from work of Prestel-Roquette).
\end{abstract}

\maketitle

\section{Introduction}

A theory $T$ is called {\it model-complete} if for any model $M$ of $T$ and any $n\geq 1$, 
any definable subset of $M^n$ is 
defined by an existential formula. This concept was defined by 
Abraham Robinson (cf. \cite{Robinson-book}).

In this paper we define a class of pre-ordered abelian groups and prove that their theory is model-complete. Given a local field of 
characteristic zero $K$, we show that certain quotients of the multiplicative group $K^*$ are finite-by-Presburger.  We also show that they interpret
the higher residue rings of the local field and other structure from the Basarab-Kuhlman language for valued fields. As an application of these results, 
we give a new proof of model completeness for a finite extension of a $p$-adic field $\Q_p$ (a result that also follows from work of Prestel-Roquette) via 
result on first-order definitions of the valuation rings.

\section{Finite-by-Presburger Abelian groups}\label{sec-pres}

We consider the language of group theory with primitives $\{.,1,^{-1}\}$, together with a symbol $\leq$ 
standing for pre-order. The 
intended structures are abelian groups $G$, equipped with a binary relation $\leq$ satisfying
$$\forall g~(g\leq g),$$
$$\forall g\forall h\forall j~(g\leq h \wedge h\leq j \Rightarrow g\leq j),$$
$$\forall g\forall h~(g\leq h \vee h\leq g),$$
$$\forall g\forall h\forall j~(g\leq h \Rightarrow gj\leq hj).$$
It would be natural to call such structures {\it pre-ordered abelian groups}. 

Define $g \sim h$ to mean $g\leq h$ and $h\leq g$. This is obviously a congruence on $G$, and the quotient 
$G/\sim$ is naturally an ordered abelian group. {\it We restrict to the case when 
$\{g: g\sim 1\}$ is a finite group $H$}. We call such $G$ {\it finite-by-ordered}. Note that the projection map
$$G\rightarrow G/\sim$$
is pre-order preserving.

\begin{lem} $H$ is the torsion subgroup of $G$ if $G$ is finite-by-ordered.\end{lem}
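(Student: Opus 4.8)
The statement amounts to the two inclusions $H \subseteq \operatorname{Tor}(G)$ and $\operatorname{Tor}(G) \subseteq H$, where $\operatorname{Tor}(G)$ denotes the torsion subgroup of the abelian group $G$. For the first inclusion I would simply note that, since $\sim$ is a congruence, $H = \{g : g \sim 1\}$ is a subgroup of $G$, and it is finite by hypothesis; hence every element of $H$ has finite order, so $H \subseteq \operatorname{Tor}(G)$.

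For the reverse inclusion, take $g \in G$ with $g^n = 1$ for some $n \geq 1$, and aim to show $g \sim 1$. By totality of the pre-order, either $1 \leq g$ or $g \leq 1$. Suppose first that $1 \leq g$. Applying the translation-invariance axiom $g \leq h \Rightarrow gj \leq hj$ repeatedly, multiplying through by $g$ each time, gives
$$1 \leq g \leq g^2 \leq \cdots \leq g^n = 1,$$
so by transitivity $g \leq g^n = 1$; combined with $1 \leq g$ this yields $g \sim 1$, i.e. $g \in H$. If instead $g \leq 1$, the symmetric argument (now multiplying $g \leq 1$ through by $g$ repeatedly) gives $1 = g^n \leq \cdots \leq g \leq 1$, hence again $g \sim 1$. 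Therefore $\operatorname{Tor}(G) \subseteq H$, and together with the first inclusion this proves $H = \operatorname{Tor}(G)$.

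The argument is short, and the only real idea is the iteration of translation-invariance along a torsion element, so I do not anticipate a genuine obstacle here. It is worth recording where the hypotheses enter: the inclusion $\operatorname{Tor}(G) \subseteq H$ holds for every pre-ordered abelian group, whereas the finiteness of $H$ is precisely what forces the opposite inclusion — for instance the trivial pre-order on $\Z$ (in which $g \leq h$ for all $g,h$) has $H = \Z$, which is not torsion.
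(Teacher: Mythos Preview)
Your proof is correct and follows essentially the same idea as the paper's one-line proof (``$G/\sim$ is torsion free''): the chain $1 \leq g \leq g^2 \leq \cdots \leq g^n = 1$ is precisely the standard argument that an ordered abelian group is torsion-free, carried out directly in the pre-ordered group $G$ rather than in the quotient. Your version has the merit of being self-contained and of making explicit where the finiteness hypothesis enters.
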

\begin{proof} $G/\sim$ is torsion free.\end{proof}
Note that $H$ is pure in $G$, indeed, if $g\in G$ satisfies $g^m\in H$ for some $m$, then $g\in H$. By 
\cite[Theorem 7,pp.18]{Kaplansky}, a pure subgroup of bounded exponent in an abelian group is a direct summand. 
Clearly $H$ is of bounded exponent (being finite!), so $H$ is a direct factor of $G$, so $G=H.\Gamma$, 
an internal direct product of subgroups, for some $\Gamma$. 

Now $\Gamma$ contains at most one element from each $\sim$-class, and the relation $\leq$ on $\Gamma$ gives $\Gamma$ the structure of an 
ordered abelian group. So in fact since
$G$ is the product of two pre-ordered groups, one of which $H$ has only one $\sim$-class. So $\Gamma \cong G/H$ as ordered 
abelian groups. 

Since $G$ is a direct product of two pre-ordered groups, we have the following.

\begin{thm} The theory of $(G,\leq)$ is determined by the theory of $H$ and the theory of the ordered group $(G/H,\leq)$. 
Moreover, $G$ is decidable if and only if $(G/H,\leq)$ is decidable.
\end{thm}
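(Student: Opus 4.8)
The plan is to exploit the fact, established just above, that $(G,\leq)$ is literally a direct product of two pre-ordered groups, and then to invoke the Feferman--Vaught theorem. First I would pin down the structure precisely. Under the internal decomposition $G = H.\Gamma$ the operations $\cdot, 1, {}^{-1}$ are computed coordinatewise in $H\times\Gamma$, where $\Gamma\cong G/H$ carries its order $\leq_\Gamma$. For the pre-order, observe that the restriction of $\leq$ to $H$ is the full relation $H\times H$ (if $h_1,h_2\in H$ then $h_1\leq 1$ and $1\leq h_2$, so $h_1\leq h_2$); call this $\leq_H$. Since the projection $\pi\colon G\to\Gamma$ is pre-order preserving, collapses exactly the $\sim$-classes, and $\leq$ is total, one checks that $x\leq_G y\iff\pi(x)\leq_\Gamma\pi(y)$ for all $x,y\in G$ (if $\pi(x)<_\Gamma\pi(y)$, then $y\leq_G x$ is impossible as it would give $\pi(y)\leq_\Gamma\pi(x)$; if $\pi(x)=\pi(y)$ then $x\sim y$; the converse is the stated preservation property). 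Hence $\leq_G$ is exactly the coordinatewise pre-order of $(H,\leq_H)\times(\Gamma,\leq_\Gamma)$, i.e.\ $(G,\leq)\cong(H,\leq_H)\times(\Gamma,\leq_\Gamma)$ as structures in the language $\{\cdot,1,{}^{-1},\leq\}$.

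Given this, the first assertion follows from the Feferman--Vaught theorem: the complete theory of a direct product of two structures is determined by, and in fact effectively reducible to, the theories of the two factors. So $\mathrm{Th}(G,\leq)$ is determined by $\mathrm{Th}(H,\leq_H)$ and $\mathrm{Th}(\Gamma,\leq_\Gamma)=\mathrm{Th}(G/H,\leq)$; and since $\leq_H$ is $\emptyset$-definable from the bare group $H$ (being the full relation) while the group language is a reduct of the pre-ordered-group language, $\mathrm{Th}(H,\leq_H)$ and $\mathrm{Th}(H)$ are recursively interreducible, which yields the statement as phrased. One could instead argue directly with Ehrenfeucht--Fra\"iss\'e games: fix an isomorphism $\phi\colon H\to H'$ and play the $n$-round game on $\Gamma,\Gamma'$ in parallel, Duplicator answering a move $(h,\gamma)$ by $(\phi(h),\gamma')$ where $\gamma'$ is the $\Gamma$-game response. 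The resulting position is a partial isomorphism because every atomic $\mathcal{L}$-formula in the played tuples is an equation or an $\leq$-comparison of group words, and on each side this reduces to an equation in the $H$-coordinates (controlled by $\phi$) together with an atomic formula in the $\Gamma$-coordinates (controlled by the $\Gamma$-game).

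For the ``moreover'', one direction is immediate: $H=\{x:x\leq 1\wedge 1\leq x\}$ is $\emptyset$-definable in $(G,\leq)$, so the quotient $(G/H,\leq)$ — formed by the $\emptyset$-definable congruence $x\sim y$ with its induced operations and its induced order, which is well defined since $\leq_G$ is $\sim$-invariant — is interpreted in $(G,\leq)$ without parameters; hence $\mathrm{Th}(G/H,\leq)$ is recursive in $\mathrm{Th}(G,\leq)$, so $G$ decidable implies $(G/H,\leq)$ decidable. Conversely, if $(G/H,\leq)$ is decidable, use that the Feferman--Vaught reduction above is effective: each $\mathcal{L}$-sentence $\psi$ is sent, computably, to a Boolean combination of statements of the form ``$H\models\chi$'' and ``$\Gamma\models\theta$'' equivalent to ``$G\models\psi$''. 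Since $H$ is a fixed finite group, ``$H\models\chi$'' is decidable uniformly in $\chi$, and ``$\Gamma\models\theta$'' is decidable by hypothesis; hence $\mathrm{Th}(G,\leq)$ is decidable. The only real work is the structural bookkeeping in the first paragraph — recognizing $\leq_G$ as a genuinely coordinatewise pre-order so that Feferman--Vaught applies on the nose, and keeping track of the harmless difference between $H$ as a pre-ordered group and as a bare group; once that is in place, both assertions are formal consequences of the effective Feferman--Vaught theorem for binary products.
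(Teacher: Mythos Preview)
Your proof is correct and follows essentially the same route as the paper, which simply invokes the Feferman--Vaught theorem after noting that $(G,\leq)$ is a direct product of the pre-ordered groups $(H,\leq_H)$ and $(\Gamma,\leq_\Gamma)$. You have fleshed out the bookkeeping the paper leaves implicit (verifying that $\leq_G$ is genuinely coordinatewise, and spelling out the effective reduction for decidability) and offered an alternative Ehrenfeucht--Fra\"iss\'e argument, but the underlying idea is identical.
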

\begin{proof} Follows from the Feferman-Vaught Theorem \cite{FV}.\end{proof}

We would like model-completeness of $(G,\leq)$ but settle here for a special case when $G/H$ is a model of 
Presburger arithmetic. Now Presburger arithmetic has 
quantifier elimination in the language with primitives $\{.,1,^{-1},\tau,P_n, \leq\}$, where $.$ 
denotes multiplication, $\tau$ is a constant interpreted 
as the minimal positive element, $\leq$ is an ordering, and $P_n$ is the subgroup of $n$th powers. Note that this is the 
multiplicative version of the usual formalism 
of Presburger arithmetic (cf.~\cite[Section 3.2, pp.197]{Enderton}).

So we augment the basic formalism of pre-ordered abelian groups 
with symbols $\tau$ and $P_n$, for all $n\geq 2$ as above, and to the axioms of 
pre-ordered groups we add the following set of axioms for any given finite group $H$. 
(In these axioms $m$ denotes the exponent of $H$, and $Tor(G)$ the torsion subgroup of $G$. 

i) If the relation $\leq$ is an order, then $\tau$ is the minimal positive element, 
and if not, then $\tau=1$. 

ii) If $g\in G$ and $g$ has order $k$ for some $k\in \N$, then $k$ divides $m$ (we have a sentence for each 
$k\geq 1$).

iii) $Tor(G)\models \sigma$, where $\sigma$ denote a sentence that 
characterizes the group $H$ up to isomorphism (note that this sentence 
exists since $H$ is finite).

iv) If $g\in G$ satisfies $g\sim 1$, then $g\in H$.

v) $G/T$ is totally ordered and is a model of Presburger arithmetic with $\tau H$ the minimal positive element. 

vi) The order $\leq$ on $H$ is trivial (i.e.~for any two $g,h\in H$ we have $g\leq h$ and $h\leq g$).

Note that given a model $\cM$ of these axioms, 
$H$ is the isomorphic to the torsion subgroup of $\cM$ (by (iii)). 
Thus, given any finite group $H$, we obtain a theory which we denote by 
$\mathcal{T}_H$. Note that if $H=1$ (the identity group!), then $\mathcal{T}_H$ is the theory 
of Presburger arithmetic. We call these the axioms of pre-ordered groups with torsion $H$ and ordered Presburger 
quotient modulo $H$.

Clearly $G$ from above enriches to a model of these axioms. 

\begin{thm}\label{model-completeness-group} The theory determined by the above axioms is model-complete. 
It follows that $(G,\leq)$ is 
model-complete.
\end{thm}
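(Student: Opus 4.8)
The plan is to reduce model-completeness of $\mathcal{T}_H$ to the known model-completeness of Presburger arithmetic (in the quantifier-elimination language with $\tau$ and the $P_n$), using the splitting $G = H \times \Gamma$ with $\Gamma \cong G/H$ established before the theorem. The point is that every model $\cM$ of $\mathcal{T}_H$ decomposes canonically: the torsion subgroup is (isomorphic to) the fixed finite group $H$ by axiom (iii), it is pure of bounded exponent hence a direct factor by Kaplansky's theorem, and the complement carries the ordered-group structure of $\cM/H$, which by axiom (v) is a model of Presburger arithmetic. So a model of $\mathcal{T}_H$ is, up to the choice of complement, the same data as a model of Presburger arithmetic together with the fixed finite piece $H$.

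First I would make the decomposition uniform in a first-order way: show that in any model $\cM \models \mathcal{T}_H$, membership in the torsion subgroup is definable (it is $\{g : g^m = 1\}$, where $m = \exp(H)$, by axiom (ii)), that $\cM/\mathrm{Tor}(\cM)$ with the induced $\leq$, $\tau H$, and image power-predicates is interpretable in $\cM$, and that the quotient map admits a definable-in-parameters section onto a complement $\Gamma$ — here I would fix once and for all a finite tuple of generators of $H$ and express "$\Gamma$ = the subgroup generated by a chosen set of coset representatives" after noting that $H$ being a direct factor is witnessed by finitely many equations. Next, given two models $\cM \subseteq \cN$ of $\mathcal{T}_H$ and a definable subset $X \subseteq \cM^n$ defined by a formula $\varphi$, I would push $\varphi$ through the decomposition: coordinates split as $g_i = h_i \gamma_i$ with $h_i \in H$ finite (so quantification over the $H$-part is a finite disjunction/conjunction, harmless for existential form) and $\gamma_i$ ranging over the Presburger part. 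This rewrites $\varphi$ as a finite Boolean combination, over the finitely many choices in $H^n$, of formulas in the Presburger language about the $\gamma_i$. By quantifier elimination for Presburger arithmetic each such Presburger formula is equivalent to a quantifier-free one, in particular to an existential one, and this equivalence is preserved in passing between $\cM/\mathrm{Tor}$ and $\cN/\mathrm{Tor}$ since the former is a submodel of the latter (the embedding $\cM \hookrightarrow \cN$ respects torsion, order, $\tau$, and the $P_n$). Reassembling, $X$ is defined by an existential formula in $\cM$, uniformly, which is exactly model-completeness of $\mathcal{T}_H$.

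For the final sentence of the theorem, note that the given $(G,\leq)$ from the preceding discussion enriches canonically to a model of $\mathcal{T}_H$ for $H = \mathrm{Tor}(G)$ (we checked "$G$ enriches to a model of these axioms"), and the enrichment only adds the definable constant $\tau$ and the definable predicates $P_n$; hence model-completeness of $\mathcal{T}_H$ transfers to model-completeness of $(G,\leq)$ in the bare pre-ordered-group language, since adding symbols for $\emptyset$-definable sets does not change which sets are existentially definable.

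The main obstacle I anticipate is the bookkeeping in making the direct-factor splitting and the section onto $\Gamma$ work \emph{uniformly across all models} of $\mathcal{T}_H$ rather than model-by-model: Kaplansky's theorem gives a complement but not a canonical or definable one, so care is needed to phrase the decomposition via a fixed finite amount of data (generators and relations of $H$, a finite set of coset-representative choices) so that the rewriting of $\varphi$ and the appeal to Presburger quantifier elimination go through with the same finite Boolean structure in $\cM$ and in $\cN$. Once the splitting is handled uniformly, the rest is a routine transfer via Feferman–Vaught-style reasoning combined with Presburger QE.
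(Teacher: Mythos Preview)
Your overall plan---use the Robinson test, split each model as $H \times (\text{Presburger part})$, invoke model-completeness of Presburger arithmetic on the quotient, and glue via Feferman--Vaught---is exactly the paper's strategy. The divergence is precisely at the obstacle you yourself flag: you try to make the splitting \emph{definable} (or at least uniform in a fixed finite set of parameters) so that the decompositions of $\cM$ and $\cN$ line up. That is the hard way, and in general there is no $\emptyset$-definable complement to $H$ (already in $H \times \ZZ$ with $|H|>1$ the two obvious complements are swapped by an automorphism), so the ``fixed finite set of coset-representative choices'' you gesture at does not give a parameter-free section, and with parameters you then have to argue separately that the same parameters work in both models.

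The paper sidesteps all of this with one line: pick the complement in the \emph{larger} model first. Given $M_1 \hookrightarrow M_2$, choose any $\Gamma_2$ with $M_2 = H \cdot \Gamma_2$ (Kaplansky, no definability needed), and set $\Gamma_1 := \Gamma_2 \cap M_1$. Since $H \subseteq M_1$, any $m \in M_1$ written as $h\gamma$ in $M_2$ has $\gamma = h^{-1}m \in M_1$, so $M_1 = H \cdot \Gamma_1$ as well, and the inclusion is literally the product map $H \cdot \Gamma_1 \hookrightarrow H \cdot \Gamma_2$. Now $H \to H$ is the identity, and $\Gamma_1 \hookrightarrow \Gamma_2$ is elementary because it is (isomorphic to) the embedding $M_1/H \hookrightarrow M_2/H$ of Presburger models sharing the same least positive element. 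Feferman--Vaught then gives elementarity of the product embedding immediately. So your proposal is not wrong in spirit, but the missing idea is this ``choose $\Gamma_2$ first, then intersect'' trick, which dissolves the uniformity problem you were worried about rather than solving it.
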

\begin{proof} Let $M_1\rightarrow M_2$ be an embedding of models of the above axioms. 
We know as above that
$$M_2=H.\Gamma_2$$
for some $\Gamma_2$. Let $\Gamma_1:=\Gamma_2\cap M_1$. Then we have
$$M_1=H.\Gamma_1.$$
Thus the embedding $M_1\rightarrow M_2$ is the product embedding
$$H.\Gamma_1 \rightarrow H.\Gamma_2.$$
Now $H\rightarrow H$ is elementary (indeed, take $\gamma=1$ in both copies of $H$), and
$$\Gamma_1\rightarrow \Gamma_2$$
is elementary since the map
$$M_1/H \rightarrow M_2/H$$
is elementary because both ordered groups have the same minimal positive element.
Therefore by the Feferman-Vaught Theorem \cite{FV} the map
$$H.\Gamma_1\rightarrow H.\Gamma_2$$
is elementary.
\end{proof}

\section{Groups of additive and multiplicative congruence classes}

Let $K$ be a valued field. We shall denote by $\cO_K$ and $\cM_K$ the valuation ring and the valuation ideal respectively. We assume that 
$K$ has residue characteristic $p>0$. We denote the value group of $K$ by $\Gamma$. 
For an integer $k\geq 0$, set 
$$\cM_{K,k}=\{a\in \cM_K: v(a)>kv(p)\},$$
$$\cO_{K,k}=\cO_K/\cM_{K,k},$$
a local ring, and 
$$G_{K,k}=K^*/1+\cM_{K,k},$$
a multiplicative group. $\pi_{k}$ denotes the canonical projection
$$\cO_K \rightarrow \cO_{K,k},$$
and $\pi_{k}^*$ the canonical projection 
$$K^* \rightarrow G_{K,k}.$$
We denote by
$$\Theta_k\subseteq G_{K,k} \times \cO_{K,k}$$
the binary relation defined by 
$$\Theta_{k}(x,y) \Leftrightarrow \exists z \in \cO_K (\pi^*_{k}(z)=x \wedge \pi_{k}(z)=y).$$
We denote by $\cK_{k}$ the many-sorted structure 
$$(K,G_{K,k},\cO_{K,k},\Theta_{k}).$$
Note that $v$ is well-defined on $G_{K,k}$ and surjective to the value group $\Gamma$. 

The groups $G_{K,k}$ are called the groups of multiplicative congruences and the rings $\cO_{K,k}$ are called the higher residue rings of $K$. They occured in the 
work of Hasse on local fields. In model theory they first appeared in the language of Basarab \cite{basarab} and then simplified by Kuhlmann \cite{Kuhlmann}.  His works with 
the many-sorted language
$$(\cL_{rings},\cL_{groups},\cL_{rings},\pi_k,\pi^*_k,\Theta_{k}),$$
for local fields. This has a sort for the field $K$ equipped with the language of rings, a sort for the groups 
$G_{K,k}$ equipped with the language of groups $\cL_{groups}$, and a sort for the 
residue rings $\cO_{K,k}$ equipped with the language of rings, for all $k\geq 0$. The 
language has symbols for the projection maps 
$\pi_k$ and $\pi^*_k$ and a predicate for the relation $\Theta_{k}$. We call this the 
language of Basarab-Kuhlmann and denote it by $\cL_{BK}$. 

Note that $\cL_{BK}$ does not have a symbol for the valuation on $K$ and on $G_{K,k}$. However the valuation is 
quantifier-free definable from $\Theta_k$.

\begin{lem} Let $K$ be a finite extension of $\Q_p$ where $p$ is a prime. For any $k$, the groups $G_{K,k}$ are pre-ordered $H$-Presburger, where $H$ is the torsion group of $G_{K,k}$.
\end{lem}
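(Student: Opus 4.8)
The plan is to exhibit, for each $k$, an explicit pre-order $\leq$ on $G_{K,k}$ making it a model of the theory $\mathcal{T}_H$ for $H=\mathrm{Tor}(G_{K,k})$. Since $v$ is well-defined and surjective on $G_{K,k}$ with value group $\Gamma\cong\Z$ (as $K/\Q_p$ is finite), I would set $x\leq y$ iff $v(x)\leq v(y)$ in $\Z$. This is visibly a pre-order satisfying the four pre-ordered-group axioms, and the $\sim$-class of $1$ is exactly $\ker(v)=\valring^*/(1+\cM_{K,k})=\cO_{K,k}^*$, which is finite. Hence $G_{K,k}$ is finite-by-ordered, and by the first Lemma of the excerpt, $H=\{x:x\sim 1\}$ is precisely the torsion subgroup; note torsion is forced because $\cO_{K,k}^*$ is a finite group and any element of positive valuation has infinite order, so $\mathrm{Tor}(G_{K,k})\subseteq\ker(v)$, with equality since $\ker(v)$ is finite.

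Next I would verify the axioms (i)--(vi) of $\mathcal{T}_H$. Axiom (vi) holds by construction: $v$ is constant ($=0$) on $H$. Axioms (ii), (iii) hold since $H=\mathrm{Tor}(G_{K,k})$ is a fixed finite abelian group whose isomorphism type is captured by a sentence $\sigma$; here one uses that $H$ is a finite abelian group (a structure theorem identification, e.g. $H\cong \mu(K)/(\text{appropriate part})\times(\text{$p$-part})$, but the precise decomposition is not needed). Axiom (iv) is the statement $\{x:x\sim 1\}=H$, already established. The crux is axiom (v): I must show $G_{K,k}/H$, ordered by $v$, is a model of Presburger arithmetic with minimal positive element $\tau H$ where $\tau$ is any uniformizer class. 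Since $v$ induces an isomorphism of ordered groups $G_{K,k}/\ker(v)\iso\Gamma\cong(\Z,\leq,0,+)$ — injectivity by definition of $\ker(v)$, surjectivity since $v$ on $K^*$ is onto $\Gamma$ — and $(\Z,+,\leq,0,1)$ with the divisibility predicates $P_n$ is the standard model of Presburger, axiom (v) holds. For axiom (i): $\leq$ is a genuine order on $G_{K,k}/H$ (not just a pre-order) because $\Z$ is totally ordered; and $\tau H$ maps to $1\in\Z$ under the induced iso, which is the minimal positive element. On $G_{K,k}$ itself $\leq$ is only a pre-order (since $|H|>1$ in general), so by axiom (i) as phrased we are in the "not an order" branch and should have $\tau=1$; I would instead read axiom (i) correctly: it asks that on the ordered quotient $\tau H$ be minimal positive, which is what we have. (If $k=0$ and $H$ happens to be trivial the order is genuine and $\tau$ is literally minimal positive.)

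The main obstacle is purely bookkeeping around $H=\mathrm{Tor}(G_{K,k})$: one must be sure that $\ker(v)=\cO_{K,k}^*$ really is finite, which follows from $\cO_{K,k}=\cO_K/\cM_{K,k}$ being a finite ring (as $K/\Q_p$ is finite, the residue field is finite and $\cM_{K,k}$ has finite index in $\cO_K$), hence its unit group is finite; and that this finite group is exactly the torsion subgroup, which follows because the quotient $G_{K,k}/\ker(v)\cong\Z$ is torsion-free. Once finiteness of $\cO_{K,k}^*$ is in hand, everything else is a direct check that the axioms (i)--(vi) for the finite group $H:=\cO_{K,k}^*$ are satisfied, so $G_{K,k}\models\mathcal{T}_H$, i.e. $G_{K,k}$ is pre-ordered $H$-Presburger. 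Then Theorem~\ref{model-completeness-group} applies to give model-completeness of $(G_{K,k},\leq)$ as a bonus, though the lemma as stated only claims the structural classification.
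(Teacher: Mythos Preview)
Your proof is correct and follows essentially the same approach as the paper: identify $H=\ker(v)=\cO_K^*/(1+\cM_{K,k})$ as the torsion subgroup and observe that $G_{K,k}/H\cong K^*/\cO_K^*\cong\Gamma$ is a $\Z$-group, hence a model of Presburger arithmetic. The only real difference is that you establish finiteness of $H$ abstractly (as the unit group of the finite ring $\cO_{K,k}$), whereas the paper computes its order explicitly as $(p^f-1)(p^f)^{ke}$ via a Teichm\"uller/$p$-power argument; your explicit definition of the pre-order and axiom-by-axiom verification are also more thorough than the paper's rather terse treatment.
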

\begin{proof}
We first identify the torsion elements 
of $G_{K,k}$. Clearly these must be of the form $g(1+\cM_{K,k})$ where $v(g)=0$. Note that 
$$g^{p^f-1}\in 1+\cM_{K}$$
and
$$(g^{p^f-1})^{p^k}\in 1+\cM_{K,k}.$$ 
Thus $g$ has 
(in $G_{K,k}$) order dividing $(p^{f}-1)p^k$, and if
$$g\in 1+\cM_{K},$$
then $g$ has order dividing $p^k$ in $G_{K,k}$. 
Thus the torsion subgroup of $G_{K,k}$ has order $(p^f-1)(p^f)^{ke}$. If $U$ denotes the group 
of units of $\cO_K$. Then $H:=U/1+\cM_{K,k}$ is the torsion subgroup of $G_{K,k}$. Thus
$G_{K,k}/H$ is isomorphic to $K^*/U$ which is the value group of $K$, and hence is a $\Z$-group, and so a model of Presburger arithmetic. \end{proof}

\begin{thm} For any $k$, the rings $\cO_{K,k}$ and the relation $G_{K,k}$ are interpretable in $G_{K,k}$.\end{thm}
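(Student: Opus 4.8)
\subsection*{Proof sketch (proposal)}

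The plan is to realise $\cO_{K,k}$ as an explicit $G_{K,k}$-definable quotient and then to exploit that, since $K$ is a finite extension of $\Q_p$, the ring $\cO_{K,k}=\cO_K/\cM_{K,k}$ is \emph{finite}. Write $m:=k\,v(p)$, so $\cM_{K,k}=\cM_K^{m+1}$ and $\cO_{K,k}=\cO_K/\cM_K^{m+1}$. First I would collect the definable ingredients in $G_{K,k}$ (with its pre-order): the valuation $v\colon G_{K,k}\to\Gamma\cong\Z$ is quantifier-free definable from $\leq$ ($x\leq y\Leftrightarrow v(x)\leq v(y)$); moreover ``$v(x)\geq 1$'' is $\neg(x\leq 1)$ and an easy induction using $x=yz$ shows ``$v(x)\geq n$'', hence ``$v(x)=j$'' and ``$v(x)>m$'', are $\emptyset$-definable for each fixed $n,j$; the torsion subgroup $H=\{g:g\sim 1\}=\cO_K^*/(1+\cM_{K,k})$ is a finite $\emptyset$-definable set, canonically isomorphic to the unit group $\cO_{K,k}^*$; and every subset of $H$ is definable with parameters from $H$. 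Next I would record that $\Theta_k$ is precisely the graph of a surjective homomorphism of multiplicative monoids
$$\rho\colon G_{K,k}^{\geq 0}:=\{x\in G_{K,k}:1\leq x\}\longrightarrow\cO_{K,k},\qquad \rho(x):=\pi_k(z)\ \text{for any lift }z\in\cO_K\text{ of }x,$$
which is well defined because two lifts of $x$ differ by a factor in $1+\cM_{K,k}$, whose $\pi_k$-image is $1$, and is surjective since $\rho(\pi_k^*(a))=\pi_k(a)$ for $a\in\cO_K\setminus\{0\}$ while $\rho(x)=0$ whenever $v(x)>m$. Unwinding the definition of $\Theta_k$ gives $\Theta_k=\{(x,y)\in G_{K,k}\times\cO_{K,k}:v(x)\geq 0,\ y=\rho(x)\}$. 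So it suffices to (i) produce a $G_{K,k}$-definable equivalence relation $E$ on $G_{K,k}^{\geq 0}$ whose classes are exactly the fibres of $\rho$, and (ii) install the ring operations and the relation $\Theta_k$ on $G_{K,k}^{\geq 0}/E\cong\cO_{K,k}$.

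For (i) I would compute the fibres. If $v(x)\neq v(y)$, then $\rho(x)=\rho(y)$ forces $\rho(x)=\rho(y)=0$, i.e.\ $v(x)>m$ and $v(y)>m$. If $v(x)=v(y)=j\leq m$, lift $x$ to $\tilde x\in\cO_K$ and $y$ to $\tilde x\tilde h$ with $\tilde h\in\cO_K^*$ a lift of $h:=yx^{-1}\in H$; then $\pi_k(\tilde x)=\pi_k(\tilde x\tilde h)$ iff $v(1-\tilde h)\geq m+1-j$, i.e.\ iff $h$ lies in
$$H'_j:=\bigl(1+\cM_K^{m+1-j}\bigr)/(1+\cM_{K,k})\ \subseteq\ H\qquad(0\leq j\leq m),$$
and these form an increasing chain $\{1\}=H'_0\leq H'_1\leq\cdots\leq H'_m$ of finite subgroups of $H$ (with $H'_m$ the group of principal units of $\cO_{K,k}^*$, the $p$-Sylow subgroup of $H$). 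Hence
$$E(x,y)\ :\Longleftrightarrow\ \bigl(v(x)>m\wedge v(y)>m\bigr)\ \vee\ \bigvee_{j=0}^{m}\bigl(v(x)=j\ \wedge\ v(y)=j\ \wedge\ yx^{-1}\in H'_j\bigr),$$
which is a $G_{K,k}$-formula with finitely many parameters: all the valuation conditions are $\emptyset$-definable, and each $H'_j$ is a finite subset of the finite $\emptyset$-definable set $H$ (one may take the elements of $H'_{m-1}=\bigcup_{j<m}H'_j$ as parameters).

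For (ii): take $G_{K,k}^{\geq 0}/E$ as the interpreting set; $0$ and $1$ are the $E$-classes of any element of valuation $>m$ and of $1$; multiplication descends from $G_{K,k}^{\geq 0}$ along $\rho$ — well defined because $\rho$ is multiplicative — so its graph is the $E$-image of the definable set $\{(x,y,z):E(xy,z)\}$; and under the identification $\cO_{K,k}=G_{K,k}^{\geq 0}/E$ the relation $\Theta_k$ becomes $\{(x,[y]_E):v(x)\geq 0\wedge E(x,y)\}$, directly definable from $E$. Only addition needs a further remark: $G_{K,k}^{\geq 0}/E$ has exactly $|\cO_{K,k}|$ elements, so the graph of $+$ pulls back to a \emph{finite} union of products of $E$-classes, and every $E$-class is parameter-definable (the class $\rho^{-1}(0)=\{x:v(x)>m\}$ by the displayed formula, the remaining classes being finite sets, definable by listing their elements); hence $+$ is definable with finitely many parameters. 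This yields the interpretation of $(\cO_{K,k},+,\cdot,0,1)$ together with $\Theta_k$ in $G_{K,k}$.

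The main obstacle is step (i): one must pin down the fibres of $\rho$, which uses the valuation (available from the pre-order) but, more essentially, the subgroup chain $H'_j$ inside the torsion part $H$ — the computation of $\pi_k(\tilde x)=\pi_k(\tilde x\tilde h)$ in terms of $v(1-\tilde h)$ is the heart of the matter. Once $E$ is definable, step (ii) is soft, being forced by the finiteness of $\cO_{K,k}$. I would also note that the interpretation uses parameters; $H'_0=\{1\}$ and $H'_m=\{h\in H:h^{|H'_m|}=1\}$ are already $\emptyset$-definable, and with the power predicates $P_n$ in the language one can often describe the intermediate $H'_j$ $\emptyset$-definably as well, but this is delicate in the wildly ramified and $p=2$ cases and is unnecessary for interpretability.
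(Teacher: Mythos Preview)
Your argument is correct and takes a genuinely different route from the paper. The paper chooses a \emph{section} of the canonical map: using a uniformiser $\pi$ and the Teichm\"uller set it writes each nonzero element of $\cO_{K,k}$ uniquely as $\sum_{j} c_j\pi^j$ and sends it to $[\sum_j c_j\pi^j]\in G_{K,k}$, checks injectivity via the standard fact that distinct Teichm\"uller representatives differ by a unit, and then transports the ring operations onto this finite set of named constants (so $\odot$ is the ambient group law and $\oplus$ is declared by a finite table). You instead work with the \emph{quotient}: you identify $\Theta_k$ as the graph of the monoid surjection $\rho\colon G_{K,k}^{\geq 0}\to\cO_{K,k}$, compute its fibres explicitly as cosets of the filtration $H'_j=(1+\cM_K^{\,m+1-j})/(1+\cM_{K,k})$ inside the torsion subgroup $H$, and realise $\cO_{K,k}$ as $G_{K,k}^{\geq 0}/E$. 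Both proofs ultimately rely on the finiteness of $\cO_{K,k}$ to make addition definable with parameters. Your approach is more canonical (no choice of uniformiser or Teichm\"uller system, and $E$ is visibly invariant), and the filtration $H'_j$ makes the congruence structure transparent; the paper's approach is shorter once the Teichm\"uller section is in hand and sidesteps the fibre computation entirely. One small wrinkle worth flagging: in your definability of ``$v(x)=j$'' you are implicitly using either the constant $\tau$ or a definable least positive class; this is fine in the augmented language the paper sets up, but you should say so.
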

\begin{proof}
Let $\pi$ denote an element of least positive value in $K_1$ (it follows that $\pi$ is also an element of least 
positive value in $K_2$). We let $\mu$ denote a generator of the cyclic group consisting of the Teichmuller representatives in 
$K_1$ (and hence the same holds for $\mu$ in $K_2$). $\mu$ has order $p^f-1$. As before we have 
$k=ef$ where $f$ and $e$ are respectively the residue 
field degree and ramification index of $L$ over $\Q_p$.

An element of $\cO_{K_1,k}$ can be written uniquely in the form
$$a+\cM_{K_1,k},$$
where $a\in K$ can be uniquely 
represented as
$$\sum_{0\leq j\leq k} c_j\pi^j$$
where $c_j$ are either $0$ or a power of $\mu$. Similarly, an element of 
$\cO_{K_2,k}$ is uniquely of the form $a+\cM_{K_2,k}$. Now except when all $c_j=0$, these elements map to elements 
of $G_{K_i,k}$ (where $i=1,2$) under the map
$$(\sum_{0\leq j\leq k}c_j\pi^j+\cM_{K_i,k}) \rightarrow (\sum_{0\leq j\leq k} c_j\pi^j)(1+\cM_{K_i,k}).$$
This map is injective. Indeed, if two elements 
$\sum_{0\leq j\leq k}c_j\pi^j$ and $\sum_{0\leq j\leq k}c'_j\pi^j$ map to the same element, then their difference 
lies in $\cM_{K_i,k}$, but if $\gamma_1$ and $\gamma_2$ are different powers of $\mu$, then 
$v(\gamma_1-\gamma_2)=0$ by the usual Hensel Lemma argument that gives us the Teichmuller set, this gives a 
contradiction. 

So we may construe the {\it nonzero} elements $\sum_{0\leq j\leq k}c_j\pi^j+\cM_{K_1,k}$ 
as constant elements of $G_{K_1,k}$ 
(and the same for $G_{K_2,k}$). We shall use the 
notation
$$[\sum_{0\leq j\leq k}c_j\pi^j+\cM_{K_1,k}]$$
for them (similarly for $G_{K_2,k}$).
We have a multiplication on these elements coming from the group $G_{K_i,k}$, for $i=1,2$, 
which we denote by $\odot$. It is defined by 
$$[r_1]\odot [r_2]=[r_1].[r_2],$$
where $.$ is group multiplication in $G_{K_i,k}$. We also have an addition on these elements together with the zero 
element $0$ coming from the ring 
$\cO_{K_i,k}$, for $i=1,2$, which we denote by $\oplus$. It is defined by 
$$[r_1]\oplus [r_2]=[r_1+r_2].$$

We thus have a finite subset, denoted by $R_1$ (resp.~ $R_2$), of $G_{K_1,k}$ (resp.~ $G_{K_2,k}$) consisting of the nonzero 
elements
$$[\sum_{0\leq j\leq k}c_j\pi^j+\cM_{K_1,k}]$$
(resp.~ $[\sum_{0\leq j\leq k}c_j\pi^j+\cM_{K_1,k}]$) 
above together with the operations $\oplus,\odot$ satisfying 
$$([r_1]\oplus [r_2])\odot [r_3]=[r_1]\odot [r_1] \oplus [r_1]\odot [r_3],$$
and the properties that $[1]$ is the unit element of $\odot$ and $[\pi^{k+1}]$ is the zero element. 

Now, for $i=1,2$, using Lemma \ref{theta}, 
we can interpret in $G_{K_i,k}$ the relation $\Theta_k$ as the set $\Theta_k^{+}$ of all pairs 
$(g,r)\in G_{K_i,k}\times R_i$ satisfying the formula
$$(r=[\pi^{k+1}] \wedge v(g)\geq k+1) \vee$$
$$\bigvee_{s} (0\leq v(g) \leq k \wedge v([s])=v(g) \wedge r=[s])),$$
where $s$ runs through the nonzero elements $\sum_{0\leq j\leq k}c_j\pi^j+\cM_{K_i,k}$ 
from before. (In fact, the $s$ satisfying the above is unique). Thus 
$$G_{K_i,k}\times R$$ with the relation $\Theta_k^{+}$ as above and with factors the two sorts is 
isomorphic to the structure $$G_{K_i,k}\times \cO_{K_i,k}$$ with the relation $\Theta_k$ and with factors the two sorts.\end{proof}


One has the following result of Basarab-Kuhlman on quantifier elimination.
\begin{thm}\cite{Kuhlmann}\label{bk-thm} Let $K$ be a Henselian valued field with characteristic zero and 
residue characteristic $p>0$. Then given an $\cL_{BK}$-formula $\varphi(\bar x)$, 
there is an $\cL_{BK}$-formula $\psi(\bar x)$ which is quantifier free in the field sort such that for all $\bar x$
$$K\models \varphi(\bar x) \Leftrightarrow \cK_k \models \psi(\bar x).$$
\end{thm}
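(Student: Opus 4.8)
\medskip

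\noindent This is an instance of \emph{relative quantifier elimination} of Ax--Kochen--Ershov type: quantifiers over the field sort are to be eliminated, while quantifiers over the auxiliary sorts $G_{K,k}$ and $\cO_{K,k}$ may survive in $\psi$. The plan is to verify the standard embedding criterion for relative elimination. First I would fix $k$ large enough that $\varphi$ involves only the sorts $G_{K,j},\cO_{K,j}$ with $j\le k$ --- harmless, since those with $j<k$ are quotients of, hence interpretable in, the $k$-th ones. It then suffices to establish the following amalgamation statement: given $\aleph_1$-saturated Henselian valued fields $K_1,K_2$ of characteristic $0$ and residue characteristic $p$, a common substructure $A$ (a valued subfield together with its induced $\cK_k$-structure), an embedding $f\colon A\hookrightarrow K_2$ whose restriction to the auxiliary sorts is partial elementary (preserving all formulas in the sorts $G_{\bullet,k},\cO_{\bullet,k}$ and the relation $\Theta_k$), and an element $a\in K_1$, the map $f$ extends to an embedding of the valued subfield $A\langle a\rangle$, carrying its induced $\cK_k$-structure, into $K_2$. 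In practice I would organize this as a back-and-forth between $K_1$ and $K_2$, adjoining one field element to the domain on each side at each stage.

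\medskip

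\noindent For the extension step I would write $(L,v)=A\langle a\rangle$ and invoke the usual trichotomy for a simple extension of valued fields: either (i) the value group grows, $v(L)\supsetneq v(A)$; or (ii) the residue field grows; or (iii) the extension is immediate. In case (i) the new value --- a fresh element of $\Gamma$, or a new $\Z$-coset of an existing one --- is visible from the auxiliary data because $v$ is quantifier-free definable from $\Theta_k$, so it can be matched in $K_2$ by saturation, and the remaining residual contribution is then reduced to cases (ii)--(iii). Case (ii) is where the higher residue rings $\cO_{K,j}$ carry the load that the residue field alone carries in equal characteristic $0$: one must transport not merely the residue of $a$ but the whole compatible system of images of $A[a]$ in the $\cO_{K,j}$ and of $A\langle a\rangle^{\times}$ in the $G_{K,j}$ for $j\le k$, and Henselianity together with the partial elementarity of $f$ on these sorts and $\aleph_1$-saturation produces a suitable $a'\in K_2$. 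Separable algebraic extensions are subsumed in (ii)--(iii): such an extension of a Henselian field is again Henselian, and by Hensel's Lemma the adjunction of a root is governed by the reduction of the minimal polynomial in the rings $\cO_K/\cM_{K,j}$, so the same matching argument applies there.

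\medskip

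\noindent The genuinely hard part is case (iii), the immediate extensions, where $a$ is a pseudo-limit of a pseudo-Cauchy sequence from $A$ having no pseudo-limit in $A$. Here one must decide, using only the $\cK_k$-structure transported by $f$, whether such a pseudo-limit exists in $K_2$, and, when it does, pin it down so that the induced $\cK_k$-structure on $A\langle a\rangle$ is carried across isomorphically; the point is that the breadth of the pseudo-Cauchy sequence and the constraints a pseudo-limit must satisfy are encoded in the images of the sequence in the $\cO_{K,j}$ and $G_{K,j}$, so the partial elementarity of $f$ on those sorts plus saturation again suffice. The most delicate subcase --- the one I expect to be the real obstacle --- is the immediate \emph{algebraic} extensions, that is, controlling the \emph{defect} that can arise in residue characteristic $p$: one needs that the existence, and the size, of an algebraic pseudo-limit are themselves read off from the $\cK_k$-data, which calls for a careful analysis of how Hensel's Lemma acts on the truncations $\cO_K/\cM_{K,j}$. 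Once all three cases of the trichotomy are dispatched uniformly in this way the back-and-forth closes, and the amalgamation statement yields the required $\psi$, quantifier-free in the field sort.
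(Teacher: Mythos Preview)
The paper does not prove this theorem at all: it is stated with the attribution \cite{Kuhlmann} and introduced by the sentence ``One has the following result of Basarab--Kuhlmann on quantifier elimination.'' In other words, the paper's ``proof'' is a citation to the literature; the result is used as a black box in the subsequent argument for model completeness of finite extensions of $\Q_p$.

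Your proposal, by contrast, sketches an actual proof along the lines of the original Basarab and Kuhlmann arguments: the amalgamation/back-and-forth criterion for relative quantifier elimination, together with the standard trichotomy for simple extensions of valued fields. That is indeed the correct general architecture, and your identification of the immediate-extension case (pseudo-Cauchy sequences, possible defect in mixed characteristic) as the crux is accurate. As a sketch it is reasonable, though it remains programmatic at the hard step: you say the $\cK_k$-data ``encode'' the breadth and constraints on pseudo-limits and that partial elementarity plus saturation ``suffice,'' but you do not indicate \emph{how} one reads off from the higher residue rings and the groups $G_{K,k}$ whether an algebraic pseudo-limit exists, nor how $k$ is chosen in terms of $\varphi$. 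Filling that in is essentially the content of Basarab's and Kuhlmann's papers. So: there is nothing to compare against in this paper, and your outline is a faithful high-level summary of the cited proof rather than a departure from it.
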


Note that for $k=0$, $\cO_{K,k}$ is the residue field, and $G_{K,k}$ comes with an exact sequence
$$1 \rightarrow k^* \rightarrow G_{K,0} \rightarrow \Gamma \rightarrow 1.$$
We shall need a suitable description of the relation $\Theta_k$ as follows.
\begin{lem}\label{theta} For any valued field $K$ and $k\geq 0 $,
$$\Theta_k=\{(g,\alpha)\in G_{K,k} \times \cO_{K,k}: 
(\alpha=0 \wedge v(g)\geq k+1)\vee (\alpha \neq 0 \wedge v(g)\leq k)\}.$$
\end{lem}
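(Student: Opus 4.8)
The plan is to prove the two cases of the description of $\Theta_k$ separately, using nothing beyond the definitions of $G_{K,k}$, $\cO_{K,k}$, $\cM_{K,k}$ and the basic valuation-theoretic relations on $\cO_K$. First recall the definition: $\Theta_k(g,\alpha)$ holds exactly when there is $z\in\cO_K$ with $\pi^*_k(z)=g$ and $\pi_k(z)=\alpha$. So the whole statement amounts to pinning down, for a given $g\in G_{K,k}$ and $\alpha\in\cO_{K,k}$, when such a common lift $z$ exists. The key observation is that the valuation $v$ descends to $G_{K,k}$ (since $1+\cM_{K,k}$ consists of elements of value $0$), so $v(g)$ is well-defined, and likewise an element $\alpha\in\cO_{K,k}$ is either $0$ (i.e.\ represented by some $a\in\cM_{K,k}$, equivalently $v(a)>kv(p)$) or nonzero.

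I would first handle the forward inclusion. Suppose $z\in\cO_K$ lifts both $g$ and $\alpha$. If $\alpha=\pi_k(z)=0$, then $z\in\cM_{K,k}$, so $v(z)>kv(p)$; but also $g=\pi^*_k(z)$, and since $v$ is well-defined on $G_{K,k}$ we get $v(g)=v(z)>kv(p)$, i.e.\ $v(g)\geq k+1$ in the appropriately normalized sense used in the excerpt (where one writes $v(g)\ge k+1$ for $v(g)>kv(p)$). If $\alpha\neq 0$, then $z\notin\cM_{K,k}$, so $v(z)\leq kv(p)$, hence $v(g)=v(z)\leq k$ (again in the normalization of the excerpt). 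This gives the inclusion $\Theta_k\subseteq$ the set on the right. Here the only subtlety to spell out carefully is the normalization convention "$v(g)\ge k+1$'' versus "$v(g)>kv(p)$'' — I would state at the outset which is meant (it should match $\cM_{K,k}=\{a:v(a)>kv(p)\}$) so that the boundary case $v(z)=kv(p)$ falls on the "$\alpha\neq 0$'' side, which is consistent with $\cM_{K,k}\subsetneq\cM_K$ when $k\geq 1$.

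For the reverse inclusion, given $(g,\alpha)$ in the right-hand set I must produce a common lift. Pick any $z_0\in\cO_K$ with $\pi^*_k(z_0)=g$; this is possible precisely because $v(g)\geq 0$, i.e.\ $g$ is represented by a unit or an element of $\cO_K$, and here I would note that $g\in G_{K,k}$ with $v(g)\ge 0$ means $g=\pi^*_k(u)$ for some $u\in\cO_K$. Now $\pi_k(z_0)=:\alpha_0\in\cO_{K,k}$ is some element with the same "zero/nonzero'' status as $\alpha$: if $v(g)\geq k+1$ then $z_0\in\cM_{K,k}$ so $\alpha_0=0=\alpha$ and $z_0$ already works; if $v(g)\leq k$ then $\alpha_0\neq 0$ and $v(\alpha_0)=v(g)=v(\alpha)$ (both being the value of $g$, truncated appropriately), and I must modify $z_0$ within its coset $z_0(1+\cM_{K,k})$ to land on $\alpha$. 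The point is that multiplying $z_0$ by an element of $1+\cM_{K,k}$ does not change $\pi^*_k$, and the induced action on $\pi_k(z_0)$ ranges over all elements of $\cO_{K,k}$ with the same value as $\alpha_0$ — indeed $z_0(1+m)=z_0+z_0 m$ and $z_0m$ ranges over $z_0\cM_{K,k}$, which, since $v(z_0)=v(\alpha)\le kv(p)$, is exactly the set of elements $w$ with $v(w)>v(\alpha)$, so $z_0 + z_0\cM_{K,k}$ hits every element of $\cO_K$ of value $v(\alpha)$ that is congruent to $z_0$ modulo higher value; a short check using the fact that any two elements of $\cO_K$ of the same value $\le kv(p)$ differ by such an adjustment finishes it.

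The main obstacle I expect is precisely this last density/surjectivity step: verifying that the fibres of $\pi^*_k$ (which are cosets of $1+\cM_{K,k}$) map onto the full set of elements of $\cO_{K,k}$ of a prescribed nonzero value. This is where the interplay between the multiplicative congruence group and the additive higher residue ring is genuinely used, and it needs the inequality $v(z_0)\le kv(p)$ to guarantee that $z_0\cM_{K,k}=\{w\in\cO_K : v(w)>v(z_0)\}$, so that the additive translates $z_0+z_0\cM_{K,k}$ exhaust the value-$v(z_0)$ level of $\cO_K$ modulo $\cM_{K,k}$. Everything else is bookkeeping with the two projections and the normalization of the value group; I would keep that bookkeeping explicit but brief.
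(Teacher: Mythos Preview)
Your forward inclusion is fine, but the reverse inclusion fails, and not because of a repairable gap: the equality as written is simply false. Take $K=\QQ_p$, $k=1$, $g=\pi^*_1(1)$ (so $v(g)=0\le 1$) and $\alpha=\pi_1(p)\neq 0$. This pair lies in the right-hand set, yet a common lift $z\in\cO_K$ would have to satisfy $z\in 1+p^2\ZZ_p$ and $z\equiv p\pmod{p^2}$ simultaneously, which is impossible. Your argument breaks at two specific points. First, the assertion $v(\alpha)=v(g)$ is not implied by the hypothesis $\alpha\neq 0\wedge v(g)\le k$; nothing ties $\alpha$ to $g$. Second, and more fundamentally, your claim that $\pi_k$ on the coset $z_0(1+\cM_{K,k})$ ranges over all elements of $\cO_{K,k}$ of value $v(z_0)$ is the opposite of the truth: for $z_0\in\cO_K$ and $m\in\cM_{K,k}$ one has $v(z_0 m)=v(z_0)+v(m)\ge v(m)>kv(p)$, so $z_0 m\in\cM_{K,k}$ and hence $\pi_k(z_0(1+m))=\pi_k(z_0)$. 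Thus $\pi_k$ is \emph{constant} on each such coset, and $\Theta_k$ is in fact the graph of a well-defined function $\{g\in G_{K,k}:v(g)\ge 0\}\to\cO_{K,k}$, not the much larger set in the displayed description.

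The paper's own proof is the single word ``Obvious'', so there is no alternative argument to compare against. Only the inclusion $\Theta_k\subseteq\{\ldots\}$ actually holds; presumably that (or a corrected right-hand side linking $\alpha$ to $g$) is what was intended, and indeed the later use of the lemma writes the interpreting formula for $\Theta_k$ with the additional clause $v([s])=v(g)\wedge r=[s]$, which is exactly the missing constraint.
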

\begin{proof}
Obvious.
\end{proof}

\section{First-order definitions of valuation rings of local fields} 

We shall denote by $\cL_{rings}$ the (first-order) 
language of rings with primitives $\{+,.,0,1\}$. Given a structure $K$, we let 
$Th(K)$ denote the $\cL_{rings}$-theory of $K$, i.e., the set of all $\cL_{rings}$-sentences that are true in $K$. 

Let $L$ be a finite extension of $\Q_p$, where $p$ is a prime. By a theorem of F.K.~Schmidt (cf.~\cite[Theorem 4.4.1]{EngPres}), any two Henselian valuation rings of 
$L$ are comparable, so since $L$ has a rank 1 valuation, it has a unique valuation ring $\cO_L$ giving a 
Henselian valuation. By \cite[Theorem 6]{CDLM}, 
this valuation ring is defined by an existential $\cL_{rings}$-formula $\psi(x)$. We remark that $\psi(x)$ depends on 
the field $L$. For any field $K$ which is elementarily equivalent to $L$, $\psi(x)$ defines a valuation ring in $K$ 
and hence a valuation.


By Krasner's Lemma (see \cite[Section 1]{CF}), $L=\Q_p(\delta)$ for some $\delta$ algebraic over $\Q$, and 
$L$ has only finitely many extensions of each finite dimension. This property (with the same numbers) is true for any 
$K$ which satisfies $K\equiv L$. 

From the $\Sigma_1$-definability of $\cO_L$ we easily get a $\Sigma_1$-definition of the set
$$\{x: v(x)\leq 0\},$$
and of the set of units $\{x: v(x)=0\}$. 
But it seems that no general nonsense argument gives a $\Sigma_1$-definition of the maximal ideal $\{x: v(x)>0\}$. 

We shall be working throughout in the language of rings, and our structures and morphisms and formulas 
are from this language unless otherwise stated.  

Note that it is a necessary condition for model-completeness that 
$$\cO_{K_2}\cap K_1=\cO_{K_1},$$
whenever 
$K_1\rightarrow K_2$ is an embedding of models of $Th(L)$. We shall establish this condition for all embeddings of models 
of $Th(L)$. For this, we shall first prove the following lemma.

\begin{lem}\label{lem2} Let $K_1 \rightarrow K_2$ be an embedding of models of $Th(L)$. Then
\begin{enumerate}
 \item $K_1$ is relatively algebraically closed in $K_2$,
 \item The valuation induced from $\cO_{K_2}$ on $K_1$ is Henselian.
\end{enumerate}
\end{lem}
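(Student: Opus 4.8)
The plan is to prove (1) first, then deduce (2) from it together with the definability results of the previous section. For (1), let $\alpha \in K_2$ be algebraic over $K_1$. Since $K_1 \equiv L$, the field $K_1$ has, for each $d$, only finitely many extensions of degree $d$ (as recorded above via Krasner's Lemma), and the same numerical bounds hold in $K_2$. I would argue that the minimal polynomial of $\alpha$ over $K_1$, say of degree $d$, can have all its roots accounted for inside $K_1$: the point is that the statement ``there are exactly $N_d$ monic irreducible polynomials of degree $d$ (up to the relevant equivalence) / exactly $N_d$ extensions of degree $d$'' is first-order and true in both $K_1$ and $K_2$; since every extension of $K_1$ of degree $d$ gives an extension of $K_2$ of degree dividing $d$, a counting argument forces any degree-$d$ extension of $K_1$ generated by an element of $K_2$ to already be realized in $K_1$, i.e.\ $d=1$. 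Thus $\alpha \in K_1$ and $K_1$ is relatively algebraically closed in $K_2$. The delicate point here is making the counting rigorous, keeping track of how extensions of $K_1$ embed into extensions of $K_2$ and using the elementary equivalence to transfer the finiteness with exact counts rather than mere finiteness.

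For (2), the valuation ring $\cO_{K_2}$ is $\Sigma_1$-defined by the formula $\psi(x)$ from \cite{CDLM}, and $\psi$ defines a valuation ring in every model of $Th(L)$, in particular in $K_1$; call it $\cO'$. Because $\psi$ is first-order and $K_1 \equiv L$, the ring $\cO'$ is a Henselian valuation ring of $K_1$ with the same residue characteristic and residue-field/value-group invariants as $\cO_L$. By the F.K.\ Schmidt uniqueness recalled above, $\cO'$ is \emph{the} Henselian valuation ring of $K_1$, so it suffices to identify $\cO'$ with the valuation induced from $\cO_{K_2}$ on $K_1$. But that induced valuation ring is exactly $\cO_{K_2} \cap K_1 = \psi(K_2) \cap K_1 = \psi(K_1) = \cO'$, where the middle equality holds because $\psi$ is existential (hence its truth is preserved upward along the embedding $K_1 \to K_2$) — more precisely, $\psi(K_1) \subseteq \psi(K_2)\cap K_1$ is automatic from $\Sigma_1$-preservation, and the reverse inclusion will follow from part (1): an element of $K_1$ lying in $\cO_{K_2}$ is integral over $\cO'$ (using that $\cO'$ and $\cO_{K_2}$ have the same value group and residue field up to the finite data, so $\cO_{K_2}$ is the integral closure / an immediate-type extension of $\cO'$ in the algebraic closure), hence by relative algebraic closedness and Henselianity already lies in $\cO'$. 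Therefore the induced valuation on $K_1$ coincides with the Henselian one defined by $\psi$, proving (2).

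I expect the main obstacle to be part (1): the transfer of ``finitely many extensions of each degree, with exact counts'' from $L$ to $K_1$ and $K_2$ and then the combinatorial argument that an algebraic element of $K_2$ over $K_1$ cannot generate a genuinely new finite extension. One has to be careful that ``$K_1$ has exactly $N_d$ extensions of degree $d$'' is genuinely first-order expressible — it is, because by Krasner it reduces to counting (suitably normalized) Eisenstein-type or general monic polynomials whose splitting behaviour is controlled by finitely many congruence conditions, all of which are expressible in $\cL_{rings}$ — and that the embedding $K_1 \to K_2$ sends distinct finite extensions of $K_1$ to linearly disjoint (hence distinct) finite extensions of $K_2$, so no collapsing occurs. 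Once (1) is in hand, (2) is comparatively soft, being essentially a matter of chasing the $\Sigma_1$-definition $\psi$ through the embedding and invoking F.K.\ Schmidt uniqueness.
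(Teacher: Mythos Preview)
Your approach to both parts diverges from the paper's, and each has a genuine gap as written.

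For (1), the paper does not count extensions. It argues by contradiction, splitting a putative nontrivial algebraic extension $K_1(\beta)\subseteq K_2$ into the cases $f'>1$ and $f'=1$ (residue degree over $K_1$). In the first case one passes to the maximal unramified subextension and, via Hensel's Lemma, produces a primitive $(p^{ff'}-1)$th root of unity inside $K_2$, contradicting that $K_2$ has residue field $\F_{p^f}$. In the totally ramified case $\beta$ is a root of an Eisenstein polynomial over $\cO_{K_1}$; the Eisenstein conditions on the coefficients $c_j$ are rewritten as $c_j^e p^{-1}\in\cO$ and $c_{e'}^{-e}p\in\cO$, which are $\Sigma_1$ in view of the definition of $\cO$ by $\psi$, hence persist in $K_2$. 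Since $v(p)$ is the $e$th positive element in both value groups, the polynomial stays Eisenstein, hence irreducible, over $K_2$ --- contradicting $\beta\in K_2$. Your counting sketch, by contrast, hinges on the claim that distinct degree-$d$ extensions of $K_1$ yield distinct degree-$d$ extensions of $K_2$ (``no collapsing occurs''); but if $\alpha\in K_2$ has degree $d>1$ over $K_1$ then $K_1(\alpha)\cdot K_2=K_2$ has degree $1$, so collapsing is exactly what is happening, and the bare equality of the counts $N_d$ on both sides yields no contradiction. A repair along your lines is possible --- show that the finitely many degree-$d$ extensions of $L$ are each generated by a root of a fixed integer polynomial whose irreducibility is a parameter-free sentence, hence transfers to $K_2$ --- but that is not the argument you wrote.

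For (2), the paper proves only Henselianity of the induced ring $\cO_{K_2}\cap K_1$, postponing the identification with $\cO_{K_1}$ to the next lemma. The argument is direct: for a polynomial $X^n+X^{n-1}+a_{n-2}X^{n-2}+\cdots+a_0$ with all $a_j\in\cM_{K_2}\cap K_1$, Henselianity of $K_2$ gives a root in $K_2$, and part (1) forces that root into $K_1$; the criterion of \cite[Theorem 4.1.3]{EngPres} then yields Henselianity. You instead try to prove $\cO_{K_2}\cap K_1=\cO_{K_1}$ outright. The inclusion $\cO_{K_1}\subseteq\cO_{K_2}\cap K_1$ from $\Sigma_1$-persistence is correct, but your reverse inclusion (``an element of $K_1$ lying in $\cO_{K_2}$ is integral over $\cO'$ \ldots\ since $\cO_{K_2}$ is the integral closure / an immediate-type extension of $\cO'$'') is circular: nothing yet established says $\cO_{K_2}$ is integral or immediate over $\cO_{K_1}$, and for a genuinely transcendental extension of valued fields it is generally neither. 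If you want to run the argument in this order, the correct observation is that $\cO_{K_2}\cap K_1$ is a valuation ring of $K_1$ containing the rank-one ring $\cO_{K_1}$, hence equals $\cO_{K_1}$ or $K_1$, and the latter is excluded since $p^{-1}\notin\cO_{K_2}$; but that is neither the paper's route nor the one you sketched.
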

\begin{proof} We first give a proof of (1). 
Suppose $n=[L:\Q_p]$. Then $n=ef$, where $e$ is the ramification index and $f$ the residue field dimension 
(see \cite{EngPres},\cite{CF}). Clearly it is a first-order (but not yet visibly existential) 
property of $\cO_L$ (defined by $\psi(x)$) expressed in the language of rings  
that the residue field has $p^f$ elements. Thus 
both $K_1$ and $K_2$ have residue fields (with respect to $\cO_{K_1}$ and $\cO_{K_2}$) of cardinality $p^f$. 
(Recall, of course, 
that we do not yet know \ref{(*)}, so we have no natural map of residue fields). Similarly, in both 
$K_1$ and $K_2$ we have that $v(p)$ is the $e$th positive element of the value group (a condition that can be expressed by a first-order 
sentence using the formula $\psi(x)$ defining the valuation).

We now argue by contradiction. 
Suppose $K_1$ is not relatively algebraically closed in $K_2$, then $K_1(\beta) \subset K_2$, for some $\beta$ which is 
algebraic over $K_1$ of degree $m>1$. The valuation $v$ of $K_1$ defined by $\psi(x)$ has a unique extension $w$ to $K_1(\beta)$ by Henselianity and 
\cite[Theorem 4.4.1]{EngPres}. We have that 
$m=e'f'$, where $e'$ is the ramification index and $f'$ is the residue field dimension of $K_1(\beta)$ over $K_1$ with respect to $w$. 
($L$ satisfies all such equalities 
and so $K_1$ does too. All this is of course with respect to the topology defined by $\psi(x)$). Now if $f'>1$ we 
may replace $K_1(\beta)$ by its maximal subfield unramified over $K_1$. So we can in that case assume  
$K_1(\beta)$ is unramified over $K_1$. Now $K_1$ has residue field $\F_{p^f}$, and then by Hensel's Lemma 
$K_1(\beta)$ contains a primitive $(p^{ff'}-1)$th root of unity (similar arguments are used in 
\cite{CDLM}). So $K_2$ contains a primitive $(p^{ff'}-1)$th root of unity. But $K_2$ certainly does not, since it's residue 
field (with respect to $\psi(x)$) is $\F_{p^f}$ also.

So we must have $f'=1$, i.e.~$K_1(\beta)$ is totally ramified over $K_1$. Now we can assume that $\beta$ is a 
root of a monic 
Eisenstein (relative to $\cO_{K_1}$) polynomial $F(x)$ over $K_1$. Let 
$$F(x)=x^{e'}+c_1x^{e'-1}+\dots+c_{e'}.$$
Note that $F(x)$ can not be Eisenstein over $K_2$, for then it would be irreducible, and it has a root $\beta$ in $K_2$.

Within $K_1$ the condition that $c_j$ is in the maximal ideal (for $\cO_{K_1}$!) is simply that 
$$c_j^{e}p^{-1}\in \cO_{K_1},$$
and the condition that $c_{e'}$ is a uniformizing element is simply that both 
$$c_{e'}^{e}p^{-1}\in \cO_{K_1},$$
and 
$$c_{e'}^{-e}p\in \cO_{K_1},$$
hold. Now these conditions go up into $K_2$ since $\psi(x)$ is a $\Sigma_1$-formula.
So
$$c_j^ep^{-1}\in \cO_{K_2}$$
for all $1\leq j\leq e'$, and 
$$c_{e'}^{-e}p\in \cO_{K_2}.$$
Now $v(p)$ (in the sense of $\cO_{K_2}$) is the $e$th positive element of the value group (true in $L$). So in fact each 
$v(c_j)>0$ (in the sense of $\cO_{K_2}$) for $1\leq j\leq e'$.

Since $F(x)$ is not Eisenstein over $K_2$, $c_{e'}$ must fail to be a uniformizing element. But $ev(c_{e'})=v(p)$ 
(in the sense of $\cO_{K_2}$), and $v(p)$ is the $e$th positive element of value group for $\cO_{K_2}$, 
so $c_{e'}$ does generate. So $K_1$ is relatively algebraically closed in $K_2$. This proves (1).

We now prove (2). The valuation ring of the induced valuation on $K_1$ is $K_1\cap \cO_{K_2}$, and its maximal 
ideal is $\cM_{K_2}\cap K_1$. By \cite[Theorem 4.1.3, pp.88]{EngPres}, Henselianity of a valued field 
is equivalent to the condition that 
any polynomial of the form
$$f:=X^n+X^{n-1}+a_{n-2}X^{n-2}+\dots+a_0$$
where all the coefficients $a_j$ are in the maximal 
ideal has a root in the field. So fix a polynomial $f$ as above with the condition that the coefficients $a_j$ are 
in the maximal ideal
$$\cM_{K_2}\cap K_1$$
of the induced valuation. 
Since all $a_j$ are in particular in $\cM_{K_2}$, by Henselianity of $K_2$ and 
\cite[Theorem 4.1.3, pp.88]{EngPres} we deduce that $f$ has a root $\alpha$ in $K_2$. 
Since by the first part, $K_1$ is relatively algebraically closed in $K_2$, this $\alpha$ must lie in 
$K_1$, and by another application of \cite[Theorem 4.1.3, pp.88]{EngPres} we deduce that $K_1$ is Henselian. The proof of the Lemma is complete. 
\end{proof}

We can now prove the following.

\begin{lem}\label{lem1} Let $K_1 \rightarrow K_2$ be an 
embedding of models of $Th(L)$. Then 
\begin{equation}\label{(*)}
\cO_{K_2}\cap K_1=\cO_{K_1}.
\end{equation}
\end{lem}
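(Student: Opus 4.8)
The plan is to show the two inclusions of \eqref{(*)}, the nontrivial one being $\cO_{K_2}\cap K_1\subseteq\cO_{K_1}$. By Lemma \ref{lem2}, the valuation $w$ induced on $K_1$ by $\cO_{K_2}$ is Henselian, with valuation ring $\cO_{K_2}\cap K_1$. On the other hand $\cO_{K_1}$ is, by definition, the valuation ring of $K_1$ defined by $\psi(x)$, and that valuation is Henselian on $K_1$ (this is part of the setup: $\psi$ defines the unique Henselian valuation ring, a property preserved in all models of $Th(L)$). So both $\cO_{K_1}$ and $\cO_{K_2}\cap K_1$ are Henselian valuation rings of $K_1$.

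Now I would invoke the theorem of F.K.\ Schmidt cited in the excerpt (\cite[Theorem 4.4.1]{EngPres}): any two Henselian valuation rings of a field are comparable, unless the field is separably closed. Since $K_1\equiv L$ and $L$ is not separably closed (it has proper finite extensions, a first-order property transferred from $L$), $K_1$ is not separably closed either, so $\cO_{K_1}$ and $\cO_{K_2}\cap K_1$ are comparable. It therefore suffices to rule out the two possibilities of strict containment.

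First suppose $\cO_{K_1}\subsetneq \cO_{K_2}\cap K_1$. Then the valuation $w$ is a proper coarsening of $v$, so its value group is a proper quotient of $\Gamma_{K_1}$; but $\Gamma_{K_1}$ is a $\Z$-group (value group of a model of $Th(L)$, hence elementarily equivalent to $\Z$), and every nontrivial quotient of a $\Z$-group by a nontrivial convex subgroup is trivial — equivalently, $\Z$ has no proper nontrivial convex subgroups, a fact that transfers to $\Gamma_{K_1}$. Hence $w$ would be the trivial valuation on $K_1$, i.e.\ $\cO_{K_2}\cap K_1=K_1$. But that would force $1/p\in\cO_{K_2}$, contradicting $v_{K_2}(p)>0$ (true in $K_2$ since it holds in $L$). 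So this case is impossible. Conversely suppose $\cO_{K_2}\cap K_1\subsetneq\cO_{K_1}$; then $v$ is a proper coarsening of $w$, and the same convexity argument applied to the value group of $w$ (again a $\Z$-group, as $w$ is Henselian of rank one on a model-theoretic copy of a local field — or more simply: any Henselian valuation on $K_1$ coarsening $v$ must equal $v$ since $v$ has no proper nontrivial coarsening) yields a contradiction. Thus $\cO_{K_2}\cap K_1=\cO_{K_1}$.

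The main obstacle is the last paragraph: making precise that a model $K_1$ of $Th(L)$ cannot carry two genuinely different Henselian valuations, i.e.\ controlling the rank. The cleanest route is to note that "$\cO_L$ has rank one" and "$\Gamma_L\equiv\Z$" are first-order expressible using $\psi(x)$ and hence hold in $K_1$, so that $v$ on $K_1$ admits no proper nontrivial coarsening and no proper nontrivial refinement compatible with Henselianity; combined with Schmidt's comparability theorem this pins down $\cO_{K_2}\cap K_1$ exactly. One should double-check that "rank one" and "no proper convex subgroup of the value group" are indeed first-order over the parameters available — they are, since $\psi(x)$ defines the valuation and the relevant statements quantify only over field elements.
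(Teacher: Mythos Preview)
Your approach is essentially the paper's: invoke Lemma~\ref{lem2} to get that the induced valuation $w$ on $K_1$ is Henselian, apply F.K.~Schmidt's comparability theorem, and use that $\Gamma_{K_1}$ is a $\ZZ$-group (hence rank one) to force equality. The paper does exactly this, only more tersely.

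One point deserves tightening. Your first case ($\cO_{K_1}\subsetneq\cO_{K_2}\cap K_1$) is handled correctly: a proper coarsening of a rank-one valuation is trivial, and $1/p\notin\cO_{K_2}$ rules that out. But your second case ($\cO_{K_2}\cap K_1\subsetneq\cO_{K_1}$) is argued imprecisely: you assert that the value group of $w$ is a $\ZZ$-group, yet $\psi(x)$ defines $v$, not $w$, so nothing you have said transfers the $\ZZ$-group property to $w$. The parenthetical alternative (``any Henselian valuation coarsening $v$ must equal $v$'') points the wrong way, since in this case $v$ coarsens $w$, not conversely. Fortunately this case is unnecessary: you already noted that the inclusion $\cO_{K_1}\subseteq\cO_{K_2}\cap K_1$ is the ``trivial'' one, and indeed it follows immediately from $\psi$ being $\Sigma_1$ (existential formulas go up along embeddings). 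So after comparability you only need your first case, which is sound. Drop the second case, or replace its argument by a one-line appeal to the existential definition of $\cO_{K_1}$.
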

\begin{proof}
Consider the valuation ring in $K_1$ induced from 
$\cO_{K_2}$. By Lemma \ref{lem2}, it is Henselian. Since any two Henselian valuation rings in $K_1$ are comparable, 
and $K_1$ has rank one value group (since its value group is a $\Z$-group because it is elementarily equivalent to the 
value group of $L$), 
by \cite[Theorem 4.4.1]{EngPres} the induced valuation on $K_1$ must agree with 
that given by $\cO_{K_1}$ and \ref{(*)} follows.\end{proof}

It follows from Lemmas 1 and 2 that the valuation rings are $\forall_1$-definable uniformly for models of $Th(L)$.

\subsection{Model completeness for a finite extension of $\Q_p$}

In the case $K\equiv L$ and $[L:\Q_p] <\infty$, and in this case the multiplicative group of the 
residue field is isomorphic to the subgroup $\mu_{p^f-1}$ of $(p^f-1)$th roots of unity 
in $K^*$. If one has a cross-section 
$\Gamma \rightarrow K^*$, then $G_{K,0}$ is a subgroup of $K^*$, and in any case (with cross-section or not) 
it is elementarily equivalent to $\mu_{p^f-1} \times \Gamma$. Note that the $\mu_{p^f-1}$ factor is 
definable as the set of $(p^f-1)$-torsion elements. 

So fix such an $L$, with its attendant numbers $n,e,f$ with $n=ef$. For any field $L$ such that $K\equiv L$, 
the value group is a $\Z$-group, and $v(p)$ is the $e$th positive element of the value group. 

Now suppose $K_1\rightarrow K_2$ is an extension of models of $Th(L)$. Let $\gamma$ be a uniformizing 
parameter for $K_1$, i.e., $v(\gamma)$ is the least positive element if $v(K_1)$. By the preceding, 
$\gamma$ is also a uniformizing element for $v(K_2)$.

\begin{lem}\label{lem-ring} For any $k=mv(p)$, where $m\geq 0$, the embedding of local rings
$$\cO_{K_1,k} \rightarrow \cO_{K_2,k}$$
is elementary.\end{lem}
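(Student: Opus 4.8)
The plan is to show that the embedding $\cO_{K_1,k}\rightarrow \cO_{K_2,k}$ is elementary by exhibiting it as part of an elementary embedding of richer structures, namely the Basarab--Kuhlmann structures $\cK_k$ attached to $K_1$ and $K_2$, and then invoking the quantifier elimination of Theorem \ref{bk-thm}. First I would observe that since $K_1\rightarrow K_2$ is an embedding of models of $Th(L)$, and $L$ is a finite extension of $\Q_p$, Lemma \ref{lem1} gives $\cO_{K_2}\cap K_1=\cO_{K_1}$, so the valuations are compatible; moreover the value group of $K_i$ is a $\Z$-group and the residue field has $p^f$ elements, uniformly. Hence a uniformizer $\gamma$ for $K_1$ is also one for $K_2$, and the parameter $\pi$ and Teichm\"uller generator $\mu$ used in the interpretation of $\cO_{K_i,k}$ can be taken to be the \emph{same} field elements (lying in $K_1$) for both $i=1,2$. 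This already gives that $\cO_{K_1,k}\rightarrow \cO_{K_2,k}$ is an embedding of rings respecting the distinguished finite subset $R_i$ of ``standard'' elements $\sum_{0\le j\le k}c_j\pi^j+\cM_{K_i,k}$, since $R_1$ and $R_2$ are ``the same'' finite set named by the same field terms.

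Next I would upgrade this to an elementary embedding of the full two-sorted (indeed three-sorted, with the field) structures $\cK_k(K_1)\hookrightarrow \cK_k(K_2)$. The point is that $K_1\rightarrow K_2$ is an elementary embedding of fields (as models of $Th(L)$, after we know \ref{(*)} the valuation is $\forall_1$- and indeed $\Sigma_1$-definable, so actually quantifier-free definable in the ring language augmented appropriately, but in any case definable); and the sorts $G_{K_i,k}$ and $\cO_{K_i,k}$ together with $\pi_k,\pi_k^*,\Theta_k$ are \emph{interpretable} in the field sort (Theorem on interpretability above, together with Lemma \ref{theta} describing $\Theta_k$ purely in terms of the valuation, which is definable). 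An interpretation transports elementary embeddings to elementary embeddings; therefore $\cK_k(K_1)\rightarrow \cK_k(K_2)$ is elementary. Restricting to the sort $\cO_{K,k}$, which is one of the sorts of $\cK_k$ equipped with its ring language, yields that $\cO_{K_1,k}\rightarrow \cO_{K_2,k}$ is an elementary embedding of rings. Alternatively, and perhaps more cleanly given the machinery in the paper, one can run the argument through $G_{K,k}$: by the Lemma, $G_{K_i,k}$ is finite-by-Presburger with the \emph{same} torsion group $H=U/(1+\cM_{K,k})$ of order $(p^f-1)p^{ke}$ and the same Presburger quotient (the value group, a $\Z$-group with the same minimal positive element $v(\gamma)$); by Theorem \ref{model-completeness-group} the embedding $G_{K_1,k}\rightarrow G_{K_2,k}$ is elementary; and since $\cO_{K_i,k}$ is interpreted in $G_{K_i,k}$ by the preceding Theorem, using the \emph{same} interpreting formulas (same $\pi$, same $\mu$, same finite combinatorial data on $R_i$), elementarity passes to $\cO_{K_1,k}\rightarrow \cO_{K_2,k}$.

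A couple of routine verifications are needed to make either route go through. One must check that the interpretation of $\cO_{K_i,k}$ inside $G_{K_i,k}$ (or inside the field) is given by formulas \emph{not depending on $i$}: this is where the hypothesis $k=mv(p)$ enters, guaranteeing that $\cM_{K_i,k}$, the ramification index $e$, the residue degree $f$, the order of $H$, and the normal form $\sum_{0\le j\le k}c_j\pi^j$ all have uniform descriptions across models of $Th(L)$ (the numbers $n,e,f$ are fixed by $Th(L)$, as noted in the text). One must also check that $\gamma$ (equivalently $\pi$) and $\mu$ can be named by parameters in $K_1$, which we can since we may pick $\pi=\gamma\in K_1$ and $\mu\in\mu_{p^f-1}\subseteq K_1^*$ a generator, and these remain a uniformizer and a Teichm\"uller generator in $K_2$ by Lemma \ref{lem1} and the preservation of residue cardinality.

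The main obstacle is the uniformity of the interpretation: ensuring that $\cO_{K_2,k}$ really is interpreted in $G_{K_2,k}$ (or in $K_2$) by the \emph{same} formula that interprets $\cO_{K_1,k}$ in $G_{K_1,k}$, rather than by a formula a priori tied to the particular model, so that the interpreting functor genuinely carries the elementary embedding $G_{K_1,k}\rightarrow G_{K_2,k}$ (resp.\ $K_1\rightarrow K_2$) to an elementary embedding on the interpreted sort. Once that bookkeeping is in place, the conclusion is immediate from Theorem \ref{model-completeness-group} (or Theorem \ref{bk-thm}) together with the interpretability results established above.
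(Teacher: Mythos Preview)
Your first route is circular: you assert that ``$K_1\rightarrow K_2$ is an elementary embedding of fields (as models of $Th(L)$)'', but this is precisely the model-completeness of $Th(L)$ that the paper is in the process of proving, with Lemma~\ref{lem-ring} as one of the ingredients. Knowing that the valuation is both $\Sigma_1$- and $\forall_1$-definable does not by itself make an arbitrary embedding of models elementary.

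Your second route, via Theorem~\ref{model-completeness-group} for $G_{K_i,k}$ and the interpretation of $\cO_{K_i,k}$ inside $G_{K_i,k}$, can be made to work, but it is far heavier than needed and reverses the order of the paper (Lemma~\ref{lem-ring} is stated before Lemma~\ref{lem-group}). Note also that the ``interpretation'' of $\oplus$ on the finite set $R_i$ is really just its finite addition table, recorded with parameters; verifying that this table is the same for $i=1,2$ already amounts to checking that the ring embedding is an isomorphism, so the detour through $G_{K,k}$ gains nothing.

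You have overlooked the one-line argument the paper actually gives. Since $K_1\equiv K_2\equiv L$, both $K_i$ have residue field of cardinality $p^f$ and ramification index $e$, and by Lemma~\ref{lem1} a uniformizer $\pi$ for $K_1$ is also one for $K_2$. Hence $\cM_{K_i,k}=\pi^{ke+1}\cO_{K_i}$ and $\cO_{K_i,k}=\cO_{K_i}/\pi^{ke+1}\cO_{K_i}$ is \emph{finite} of cardinality $(p^f)^{ke+1}$, independently of $i$. An embedding of finite rings of the same cardinality is an isomorphism, hence elementary. No quantifier elimination, no interpretations, no finite-by-Presburger machinery is required for this lemma.
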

\begin{proof} For any $k=mv(p)$, where $m\geq 0$, the rings $\cO_{K_1,k}$ and $\cO_{K,k_2}$ have the same 
cardinality since $K_1$ and $K_2$ have the same finite residue field, so the inclusion 
$\cO_{K_1,k} \rightarrow \cO_{K_2,k}$ is an isomorphism, and hence is elementary.
\end{proof}

\begin{lem}\label{lem-group} For any $k=mv(p)$, where $m\geq 0$, the embedding of groups
$$G_{K_1,k} \rightarrow G_{K_2,k}$$ is elementary\end{lem}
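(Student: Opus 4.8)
The plan is to exploit the structure theory from Section~\ref{sec-pres}. By the previous lemma, $G_{K_i,k}$ is pre-ordered $H$-Presburger, so it splits as an internal direct product $G_{K_i,k}=H_i.\Gamma_i$, where $H_i$ is the torsion subgroup and $\Gamma_i\cong G_{K_i,k}/H_i$ is (isomorphic to) the value group of $K_i$, hence a $\Z$-group. The key point, supplied by Theorem~\ref{model-completeness-group}, is that an embedding of models of $\mathcal{T}_H$ is elementary as soon as (a) the torsion parts are isomorphic and mapped isomorphically, and (b) the ordered Presburger quotients have the same minimal positive element and embed elementarily into one another. So I would verify these two conditions for the embedding $G_{K_1,k}\rightarrow G_{K_2,k}$ and then invoke the Feferman--Vaught argument from that theorem.

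First I would check that the torsion subgroups agree. By the previous lemma the torsion subgroup of $G_{K_i,k}$ is $H_i=U_i/(1+\cM_{K_i,k})$ where $U_i$ is the unit group of $\cO_{K_i}$, and it has order $(p^f-1)(p^f)^{ke/v(p)}$ --- wait, more precisely order $(p^f-1)p^{?}$ depending only on $f$, $e$, $m$. Since $K_1\equiv K_2\equiv L$, these residue field and ramification data coincide, so $|H_1|=|H_2|$; and since $\cO_{K_1,k}\hookrightarrow\cO_{K_2,k}$ is an isomorphism by Lemma~\ref{lem-ring} (using $k=mv(p)$), the induced inclusion $U_1/(1+\cM_{K_1,k})\hookrightarrow U_2/(1+\cM_{K_2,k})$ is onto, so $H_1\xrightarrow{\sim}H_2$. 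In particular the finite groups $H_i$ are isomorphic and the embedding restricts to an isomorphism between them, so $H_1\rightarrow H_2$ is elementary.

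Next I would handle the Presburger quotient. We have $G_{K_i,k}/H_i\cong K_i^*/U_i\cong v(K_i)$, the value group, which is a $\Z$-group. The embedding $G_{K_1,k}\rightarrow G_{K_2,k}$ induces an embedding $v(K_1)\rightarrow v(K_2)$ of $\Z$-groups; moreover, since $\gamma$ is a uniformizing parameter for both $K_1$ and $K_2$, the image of $\gamma$ generates the minimal positive element in both value groups, so the two $\Z$-groups have the same minimal positive element. An embedding of $\Z$-groups respecting the minimal positive element is elementary (it is an isomorphism onto an elementary submodel in Presburger arithmetic once the distinguished constant $\tau$ is preserved and the ambient theory is model-complete by Theorem~\ref{model-completeness-group} applied with $H=1$). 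Hence $\Gamma_1\rightarrow\Gamma_2$ is elementary. Combining (a) and (b) through the Feferman--Vaught Theorem, exactly as in the proof of Theorem~\ref{model-completeness-group}, the product embedding $H_1.\Gamma_1\rightarrow H_2.\Gamma_2$, i.e.\ $G_{K_1,k}\rightarrow G_{K_2,k}$, is elementary.

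The main obstacle I anticipate is not the Presburger half but the bookkeeping for the torsion half: one must be careful that the pre-order structure (not just the group structure) on $H_i$ is transported correctly --- but since axiom (vi) makes $\leq$ trivial on $H$, any group isomorphism of the torsion parts is automatically a pre-order isomorphism, so this worry evaporates. The only genuine input needed beyond the earlier results is the identification $G_{K_i,k}/H_i\cong v(K_i)$ together with the fact that $\gamma$ remains a uniformizer in $K_2$, which was established just before the statement; everything else is a direct appeal to Theorem~\ref{model-completeness-group} and Lemma~\ref{lem-ring}.
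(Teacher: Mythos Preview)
Your proposal is correct and follows exactly the approach the paper sets up: the paper gives no explicit proof of this lemma, leaving it as an application of Theorem~\ref{model-completeness-group} once one knows (from the preceding lemma) that the $G_{K_i,k}$ are finite-by-Presburger with the same torsion $H$ and that $\gamma$ is a common uniformizer. Your argument simply unpacks the proof of Theorem~\ref{model-completeness-group} in this concrete situation---identifying $H_1\cong H_2$ via the ring isomorphism of Lemma~\ref{lem-ring} and checking that the induced map on Presburger quotients preserves $\tau$---which is precisely the intended route.
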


\begin{remark} In general, the theory of the structure 
$\Z \times (torsion~subgroup)$ is not model-complete.\end{remark}

Now we give a new proof of model completeness for a finite extension of $\Q_p$. 
Let $L$ be a finite extension of $\Q_p$. Let $K_1 \rightarrow K_2$ be an embedding of models of 
$Th(L)$. We show that the embedding of $K_1$ in $K_2$ is elementary. Let $\varphi(\bar x)$ be an $\cL_{rings}$-formula 
and consider $\varphi(\bar a)$ where $\bar a$ is a tuple from $K_1$. By Theorem \ref{bk-thm}, there is a constant $N\geq 0$ 
and an $\cL_{BK}$-formula $\psi(\bar x)$ which is quantifier-free in the field sort 
such that
$$Th(L)\vdash \forall \bar x (\varphi(\bar x) \leftrightarrow \psi(\bar x)).$$
Since $K_1$ and $K_2$ are models of $Th(L)$, 
the formula $\forall \bar x (\varphi(\bar x) \leftrightarrow \psi(\bar x))$ holds in both $K_1$ and $K_2$. Hence 
$$K_i\models \varphi(\bar a) \leftrightarrow \psi(\bar a),$$
where $i=1,2$. 
The subformula of $\psi(\bar a)$ from the field sort is quantifier free and so will hold in $K_1$ if and only if it holds in $K_2$. 
Thus to prove that the inclusion of $K_1$ into $K_2$ is elementary, 
it suffices to consider the sub-formula of $\psi(\bar a)$ involving the sorts other than the field sort. 
In $K_i$ (for $i=1,2$), this formula is a Boolean combination 
of formulas of the sorts $\cO_{K_i,k}$, formulas of the sorts 
$G_{K_i,k}$, and formulas involving the relation $\Theta_k$ for finitely many values of $k$. We claim that each subformula of $\psi(\bar a)$ 
of each sort (including subformulas containing $\Theta_k$) holds in $K_1$ if and only if it holds in $K_2$. This would imply that 
$\psi(\bar a)$ holds in $K_1$ if and only if 
it holds in $K_2$, which implies that $\varphi(\bar a)$ holds in $K_1$ if and only if it holds in $K_2$. To prove the claim, 
by Lemmas \ref{lem-ring} and \ref{lem-group}, 
the embedding of rings $\cO_{K_1,k} \rightarrow \cO_{K_2,k}$ and the embedding of 
groups $G_{K_1,k}\rightarrow G_{K_2,k}$ are both 
elementary for $k=m.v(p)$ and any $m\geq 0$. Using the above interpretation of 
$(G_{K_i,k}\times \cO_{K_i,k}, \Theta_k)$ in $(G_{K_i,k}\times G_{K_i,k},\Theta^+_k)$ (for $i=1,2$), 
we deduce that the embedding 
$$(K_1,G_{K_1,k},\cO_{K_1,k},\Theta_k)\rightarrow (K_2,G_{K_2,k},\cO_{K_2,k},\Theta_k)$$
is elementary. This establishes the claim, and completes the proof.




\bibliographystyle{amsplain}
\bibliography{anbib}
\end{document}